\documentclass[11 pt]{amsart}
\usepackage{comment}
\usepackage{enumerate}
\usepackage{amssymb, amsmath}
\usepackage{bm}

\subjclass[2000]{11D45}
\keywords{Binary Forms, Thue equations, Thue inequalities, Hasse principle, local-global principle}

\def\GL{\mathrm{GL}(2,\mathbb{Z})}

\def\Z{\mathbb{Z}}
\def\F{\mathbb{F}}
\def\Q{\mathbb{Q}}
\def\disc{\mathrm{disc}}

\begin{document}
	
	\newtheorem{thm}{Theorem}[section]
	\newtheorem{prop}[thm]{Proposition}
	\newtheorem{lemma}[thm]{Lemma}
	\newtheorem{claim}[thm]{Claim}
	\newtheorem{cor}[thm]{Corollary}
	\newtheorem{conj}[thm]{Conjecture}
	\newtheorem{rk}[thm]{Remark}
	\newtheorem{defi}[thm]{Definition}

	\title[Thue equations that simultaneously fail the Hasse principle]{Thue equations that simultaneously fail the Hasse principle}

	\author{Paloma Bengoechea}
	\address{ ETH, Mathematics Dept.\\CH-8092, Z\"urich, Switzerland}
	\email{paloma.bengoechea@math.ethz.ch}
	\thanks{Bengoechea's research is supported by SNF grant 173976.}

	\begin{abstract}
		We refine a previous construction by Akhtari and Bhargava so that, 
		for every positive integer $m$, we obtain a positive proportion of  Thue equations $F(x,y)=h$ 
		that fail  the integral Hasse principle simultaneously for every positive integer $h$ less than $m$.
		The  binary forms $F$ have fixed degree $\geq 3$ and are  ordered by the absolute value of the maximum of the coefficients.
		
	\end{abstract}
	
	\maketitle
	
	\section{Introduction}
	
	Let $F(x, y)$ be an irreducible binary form  with integer coefficients of degree $n\geq 3$, and let $m$ be a positive integer.  
	Thue showed that the inequalities
	\begin{equation}\label{1}
		|F(x , y)| \leq m,
	\end{equation}
	known as \emph{Thue inequalities},  have  finitely many solutions in integers $x$ and $y$. Mahler \cite{Mah9}, Schmidt \cite{S87}, Mueller-Schmidt 
	\cite{MS88}, Thunder \cite{Thu}, recently the author \cite{Ben}, and others obtained  upper bounds for the number of solutions. 
	
	In this article we will prove that, when the forms are ordered by height, a positive proportion 
	have  no integer solutions $(x,y)$ to \eqref{1} but have $p$-adic solutions for all primes $p$ to all the equations $F(x,y)=h$, where $h$ 
	is any positive integer less than $m$. 
	
	This article is a refinement of \cite{AB}, where a positive proportion of  Thue equations $F(x,y)=m$ are shown to  have solutions
	locally everywhere but not globally (i.e. they fail the integral Hasse principle). 
	We refine the construction in \cite{AB} so that the Hasse principle fails simultaneously for all Thue equations $F(x,y)=h$, where 
	$1\leq h\leq m$.

	\begin{thm}\label{th1}
		Let $m$ be a positive integer. When integral binary forms of fixed degree $\geq 3$  are ordered by height, a positive proportion of  
		primitive maximal forms   represent every positive integer less than $m$ locally everywhere but  not  globally.
	\end{thm}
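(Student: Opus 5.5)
The plan is to follow the Akhtari–Bhargava strategy: define a set $S$ of forms by finitely many congruence conditions at finitely many primes, together with a tail condition at large primes. Inside $S$, local solubility of $F(x,y)=h$ for all $1\le h<m$ and maximality will be automatic. I will then show, by counting, that the proportion of $F\in S$ representing some such $h$ globally is strictly smaller than the density of $S$. Throughout, forms are ordered by height $H(F)=\max|a_i|$. Densities of sets defined by congruences, or by local conditions with a summable tail, exist by the standard sieve for primitive maximal forms; I will take that framework from \cite{AB} as given.

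\emph{Local solubility and maximality.} Fix a bound $P=P(n,m)$. For a prime $p>P$ with $p\nmid \disc(F)$ and $1\le h<m$ (so $p\nmid h$), the projective curve $F(x,y)=hz^n$ is smooth and absolutely irreducible of genus $(n-1)(n-2)/2$. Weil's bound then gives a point mod $p$ with $z\neq0$, which lifts by Hensel's lemma to a point of $F(x,y)=h$ over $\Z_p$. When $p>P$ but $p\mid\disc(F)$, it suffices that $F \bmod p$ has a simple linear factor and a smooth affine point on $F=h$. The forms failing this have density $O(p^{-2})$ at $p$, so excluding them is a local condition whose tail converges and costs only a positive factor. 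For each prime $p\le P$ I would impose $F\equiv xy^{n-1}\pmod p$. Then $F(x,1)\equiv x$, so $F(x,1)=h$ has the simple root $x\equiv h$ and lifts for every $h$, including $p\mid h$ after adjusting by a mod $p^2$ condition. I would add a mod $p^2$ condition of Eisenstein type, for instance $v_p$ of the $y^n$-coefficient equal to $1$, to force maximality at $p$ via Dedekind's criterion. Primitivity is then automatic. All these conditions have positive local density, so $S$ has positive density $\mu(S)$.

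\emph{Failure of the global principle.} For each $h<m$ I bound the proportion of $F\in S$ having a primitive $v\in\Z^2$ with $F(v)=h$ by the expected number $\mathbb{E}_S\#\{v \text{ primitive}: F(v)=h\}$ (Markov). I would compute this by transporting $v$ to $(1,0)$ with $\gamma\in\GL$, so that $F(v)=h$ becomes "leading coefficient $=h$". Summing over the $\GL$-orbit then gives an archimedean factor equal to the average area of $\{|F|\le m\}$, finite since $n\ge3$, times a product of local densities $\prod_p \sigma_p(h;S)$. The essential point, and the main obstacle, is to make this expected count small while keeping $S$ locally soluble everywhere. The local densities average to $1$ over all forms, so the congruence conditions must be chosen so that the $p$-adic representation density of every $h<m$ by $F\in S$ is at most some fixed $\delta<1$ while remaining positive. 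My first attempt is to work at auxiliary primes $p\equiv1\pmod n$ with $p>m$. There I would impose $F\equiv u\,y^n \pmod p$, with $u$ a non-$n$th-power unit chosen so that every $h<m$ lies in $u(\mathbb{F}_p^\times)^n$. Then representations of $h$ only come from a $1/p$-fraction of residue classes, each contributing density $n/p$-ish, giving $\sigma_p\le \delta<1$. If that fails, I would use a mod $p^2$ refinement confining solutions to a single small $p$-adic disc.

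Imposing this at $r$ such primes multiplies the expected count for each $h$ by $\le\delta^r$ but multiplies $\mu(S)$ only by a fixed positive constant per prime. That alone does not suffice, so the real work is to check that the ratio $\mathbb{E}_S/\mu(S)$, summed over the $m-1$ values of $h$, drops below $1$ for $r$ large. Here one must verify that the archimedean factor and the small-prime factors do not grow with $r$. Granting this, a positive proportion of $S$, and hence of primitive maximal forms, represents every $h<m$ locally everywhere but none globally, which proves Theorem~\ref{th1}.
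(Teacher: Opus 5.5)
\textbf{The global step has a genuine gap.} Your local part is essentially sound, apart from slips noted below. The failure of the global principle, however, rests entirely on a first-moment bound for $\sum_{v}\#\{F\in S:\ H(F)<X,\ F(v)=h\}$. You only describe this bound (``I would compute\ldots'', ``Granting this\ldots''), and it cannot be obtained from a volume-times-local-densities computation.

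For primitive $v$ with $V=\max(|v_1|,|v_2|)$, the forms with $F(v)=h$ form a translate of the lattice $(v_2x-v_1y)\Z[x,y]_{n-1}$. By Gelfond's inequality, all successive minima of this lattice are $\asymp_n V$.
\begin{itemize}
\item For $V\le X$ this gives $\ll (X/V)^n$ forms of height $<X$. Summing over $v$, this range contributes only a proportion $O(1/X)$. So, contrary to your framing, there is no positive archimedean constant for local factors to beat; even your average area of $\{|F|\le m\}$ tends to $0$, like $X^{-2/n}$.
\item The whole difficulty is the range $V\gg X$. There each translate meets the height box in at most one form, so no averaging or equidistribution is available and local densities give no bound. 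Yet there are infinitely many such $v$.
\end{itemize}
Showing that few forms admit such a representation means controlling roots of $F$ that are approximated by rationals to within roughly $|v|^{-n}$. That is a Diophantine problem your proposal never addresses.

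\textbf{The auxiliary-prime step also fails as written.} For $m\ge2$, the value $h=1$ forces $u\in(\F_p^\times)^n$, contradicting your choice of $u$. Moreover, with $F\equiv uy^n\pmod p$ the curve $uy^n=h$ is a union of $n$ lines with $np$ smooth points. So the $p$-adic density is $n$ (or $0$, which destroys local solubility), not $\le\delta<1$. Two smaller slips: for $F\equiv xy^{n-1}$ the Dedekind condition concerns the $x^n$-coefficient, since the multiple root is at $(1:0)$. And requiring a simple linear factor when $p\mid\disc(F)$ costs density $\asymp 1/p$ for $n\ge4$, which would force $\mu(S)=0$; you only need a smooth point on $F=h$.

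\textbf{The missing idea.} The paper avoids averaging over $v$ altogether.
\begin{enumerate}
\item Start from a single $F$ with Galois group $S_n$, two simple roots modulo each prime $p_i<M$, and discriminant large enough for Gy\"ory's theorem to apply to $|F|\le m\prod p_i$.
\item Iterate $F\mapsto p^{-1}F(px+\alpha y,y)$ over both roots at each $p_i$. This produces $2^k$ pairwise inequivalent maximal forms $G_j$.
\item Each $G_j$ is locally soluble for every $h\le m$: use the reduction $y^{n-1}L(x,y)$ at each $p_i$, and a Weil-bound argument at primes $>M$.
\item Solutions of the inequalities $G_j(x,y)\le m$, taken over all $j$, yield distinct primitive solutions of $|F|\le m\prod p_i$. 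Gy\"ory allows at most $32n+11$ of these, so at least $2^k-32n-11$ of the $G_j$ have no solution at all.
\end{enumerate}
This pigeonhole is deterministic and needs no information about large solutions. Counting the $G_j$ attached to each $F$ then gives the positive proportion.
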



	A \textit{maximal} form $F$ is a form which is not a proper subform of any other form, i.e. $F$ cannot be written as $G(ax+by,cx+dy)$ for any integer 
	matrix $A=\tiny{\begin{pmatrix}a &b\\c &d\end{pmatrix}}$ with $|\det(A)|>1$ and any other binary form $G$. In Theorem \ref{th1} we exclude non-maximal forms because
	if $F$ is a proper subform of $G$, then the number of integer solutions of $F$ and $G$ is the same.
	
	The  construction in \cite{AB} also works  for cubic forms  ordered by discriminant instead of the height, and for quartic 
	forms  ordered by the two generators of their rings of 
	invariants, classically denoted by $I,J$ (see \cite{Ak}).  
	The same can be done here; Theorem \ref{th1} holds for cubic forms ordered by discriminant and for quartic forms ordered by the invariants $I,J$.

	The paper is organized as follows. In  section \ref{P} we give some preliminaries, specially a result due to Gy\"ory (Theorem \ref{Gyory}) that gives
	an upper bound for 
	the number of solutions to Thue inequalities \eqref{1}. 
	In section \ref{sec F} we explain our strategy and
	how Gy\"ory's result will be relevant for the construction of forms satisfying Theorem \ref{th1}. 
	We also give an expicit lower bound for the proportion in Theorem \ref{th1} that depends on $m$ and the fixed degree  of the forms.
	In section \ref{secGj} we construct those forms.
	We prove that  they represent  every positive integer less than $m$ locally everywhere, but not globally, in sections \ref{sec local} and \ref{sec global} 
	respectively.

	\section{Preliminaries}\label{P}
	
	\subsection{ Discriminant, height and equivalent forms}
	
	For a binary form $F(x , y)$ that factors over $\mathbb{C}$ as
	$$
	\prod_{i=1}^{n} (\alpha_{i} x - \beta_{i}y),
	$$
	the discriminant $D(F)$ of $F$ is given by
	$$
	D(F) = \prod_{i<j} (\alpha_{i} \beta_{j} - \alpha_{j}\beta_{i})^2.
	$$
	Therefore, if we write
	$$
	F(x, y) = a_n (x - \gamma_{1}y)\ldots  (x- \gamma_{n}y),
	$$ 
	we have
	$$
	D(F) = a_n ^{2(n-1)} \prod_{i<j} (\gamma_{i}  - \gamma_{j})^2.
	$$
	
	
	If we write  $F(x , y)=a_{n}x^{n} + a_{n-1}x^{n-1} y+ \ldots + a_{1}x y^{n-1} + a_{0}y^n$,
	the (naive) height of $F$, denoted by $H(F)$, is defined by 
	$$
	H(F) =\max  \left( |a_{n}|, |a_{n-1}|, \ldots , |a_{0}|\right).
	$$


	For $A=\tiny{\begin{pmatrix} a & b \\ c & d \end{pmatrix}}$ and a binary form $F(x,y)$, we define the binary form $F_{A}$ by
	$$
	F_{A}(x , y) = F(ax + by , cx + dy).
	$$
	Note that
	\begin{equation}\label{St6}
		D(F_{A}) = (\textrm{det}(A))^{n (n-1)} D(F).
	\end{equation}
	If $A\in\GL$ we say that  $F_A$ is equivalent to $F$, and if $A\in\mathrm{GL}(2,\mathbb{Z}_p)$ we say that $F_A$ is equivalent to $F$ over $\Z_p$.
	Two forms that are equivalent (respectively equivalent over $\Z_p$) represent the same integers (respectively the same $p$-adic integers).

	\subsection{Number of solutions to Thue inequalities}
	
	
	The upper bounds for the number of solutions to Thue inequalities \eqref{1} obviously depend on $m$. However, if $m$ is bounded above in terms
	of the absolute value of the discriminant of $F$, then one can find bounds that are independent of $m$.
	The following is Corollary 3 of \cite{Gyory} with 
	$\vartheta=1/3$
	\footnote{Note that similar results to Theorem \ref{Gyory} were given previously in  \cite{EG} for the number of all solutions, not necessarily primitive.}
	.
	
	\begin{thm}[Gy\"ory]\label{Gyory}
		Let $F(x,y)\in\mathbb{Z}[x,y]$ be a  binary form  of degree $n \geq3$ and $m$ be an integer satisfying
		\begin{equation}\label{cGyory}
			|D(F)|\geq (3^n m)^{6(n-1)}.
		\end{equation}
		The number of  solutions to $|F(x,y)|\leq m$ in co-prime integers $x,y$ is at most $32n+11$. 
	\end{thm}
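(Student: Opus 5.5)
Theorem \ref{Gyory} is quoted from \cite{Gyory} (Corollary 3 there, with $\vartheta=1/3$), and in this paper it is used as a black box; the rest of the argument does not depend on how it is proved. If I had to prove it myself, I would follow the standard Bombieri--Schmidt--Evertse scheme for counting primitive solutions of Thue inequalities, using \eqref{cGyory} to absorb every dependence on $m$.

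\emph{Reduction and association to roots.} Since both $D(F)$ and the set of primitive solutions are invariant under $\GL$ (by \eqref{St6}, with $|\det A|=1$), I would first replace $F$ by a reduced representative of its $\GL$-class. Write $F(x,y)=a_n\prod_i(x-\gamma_i y)$. To each primitive solution $(x,y)$ of $|F(x,y)|\le m$ with $y\neq 0$, I would attach an index $i$ such that $|x-\gamma_i y|$ is minimal. The usual estimate $\prod_{j\ne i}|x-\gamma_j y|\gg |a_n|^{-1}|D(F)|^{1/2}\cdots$ should then give
\[
\Big|\frac{x}{y}-\gamma_i\Big|\ll \frac{m}{|D(F)|^{1/(2(n-1))}\,|y|^{n}}.
\]
Condition \eqref{cGyory} makes the constant here far smaller than $|y|^{-2}$ times a power of $|D(F)|^{-1/6}$. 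It therefore suffices to bound, for each fixed $i$, the number of very good approximations to $\gamma_i$, and multiply by $n$.

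\emph{Gap principle for small solutions.} Take two such solutions $(x,y),(x',y')$ attached to the same root with $|y|\le|y'|$. They are distinct and primitive, so $|xy'-x'y|\ge1$. Combining this with the approximation bound gives
\[
|y'|\gg |D(F)|^{1/(2(n-1))}\,m^{-1}|y|^{n-1}.
\]
Under \eqref{cGyory} with $n\ge3$, the factor $|D(F)|^{1/(2(n-1))}m^{-1}$ exceeds $1$ by a large margin. Hence the heights of successive solutions grow at least like iterated powers $|y_{k+1}|\ge |y_k|^{n-1}$, up to a large constant factor. Starting from $|y_1|\ge 1$, only a bounded number of solutions, about $O(1)$ per root, can have $|y|$ below a threshold $Y_0$. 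I would take $Y_0$ to be a fixed power of $|D(F)|$, chosen so that the large-solution argument applies beyond it. This accounts for the term of order $n$ in the bound.

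\emph{Large solutions: the main obstacle.} For $|y|>Y_0$ one needs an explicit Thue--Siegel type principle, as in Bombieri--Schmidt or the Thue--Siegel lemma of Evertse. Such a result says that two approximations of quality exponent greater than $\sqrt{2n}$, both lying beyond a height depending on $D(F)$, cannot be too far apart in height. Together with the gap principle, this leaves $O(1)$ large solutions per root, with an absolute constant. The work is in making every constant explicit, so that the total comes out to $32n+11$, and in checking that \eqref{cGyory} is exactly strong enough to remove the dependence on $m$ from the starting height. I would expect this bookkeeping in Gy\"ory's setting, with $\vartheta=1/3$ producing the exponent $6(n-1)$, to be the real difficulty. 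Finally, the solutions with $y=0$ contribute at most two more, namely $(\pm1,0)$.
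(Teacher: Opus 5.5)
Your first paragraph is exactly what the paper does. The paper gives no proof of Theorem~\ref{Gyory}; it imports Corollary~3 of Gy\"ory's paper with $\vartheta=1/3$ and uses it as a black box in Section~\ref{sec global}. The Bombieri--Schmidt outline you add is not needed, and it would not stand as a proof. The standard Lewis--Mahler bound carries $M(F)^{n-2}/|D(F)|^{1/2}$, with $M(F)$ the Mahler measure, rather than $|D(F)|^{-1/(2(n-1))}$, and the Thue--Siegel starting height depends on the heights of the roots. So your approximation bound, your gap principle and your choice of $Y_0$ as a power of $|D(F)|$ all tacitly assume $M(F)\ll |D(F)|^{1/(2(n-1))}$, which passing to a reduced form does not guarantee.
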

	
	A solution $(x,y)$ to \eqref{1} is called \emph{primitive} if $x$ and $y$ are co-prime integers.
	
	\subsection{Finding solutions of Thue equations over $\Z_p$}
	
	In order to find solutions of Thue equations or inequalities over $\Z_p$, most of the time we will  find a solution over $\F_p$ and lift it to 
	$\Z_p$ using Hensel's Lemma:
	If $(\alpha,\beta)\in\Z_p^2$ satisfies $F(\alpha,\beta)\equiv0\pmod{p}$ and 
	either $F_x'(\alpha,\beta)\not\equiv 0$ or $F'_y(\alpha,\beta)\not\equiv 0$ $\pmod{p}$, then there exists $(\alpha_0,\beta_0)\in\Z_p^2$ such that
	$(\alpha_0,\beta_0)\equiv(\alpha,\beta)\pmod{p}$ and $F(\alpha_0,\beta_0)=0$ over $\Z_p$.

	\section{Strategy}\label{sec F}

	Let $m\geq 1$, $n\geq 3$ and set $g=\frac{(n-1)(n-2)}{2}$. Let 
	$$
	M= \max(m,(2g+1)^2,17)
	$$
	and denote by $p_1,\ldots,p_k$ all the primes less than $M$. We have that
	\begin{equation}\label{2k}
		2^k-32n-11\geq 1. 
	\end{equation}
	Indeed, for $n\geq4$,
	$$
	k\geq \dfrac{M}{\log M}\geq \dfrac{(2g+1)^2}{2\log(2g+1)}\geq 32n+11,
	$$
	and for $n=3$, 
	$$
	2^k\geq 2^{7}\geq 32\cdot3+12.
	$$
	Let $X$ be a positive integer such that
	$$
	X> \dfrac{2^{(n+1)k} \prod_{i=1}^kp_i^{n-1}}{(n+1)^{k/2}},
	$$
	and define 
	$$
	\tilde X := X\,\dfrac{(n+1)^{k/2}}{2^{(n+1)k}\prod_{i=1}^kp_i^{n-1}} \geq1.
	$$

	Let $S$ be the set of maximal primitive integral binary forms of degree $n$ that satisfy the conditions: 
	\begin{enumerate}[(I)]
		\item they represent every positive integer $\leq m$ locally everywhere;
		
		\item they do not represent any positive integer $\leq m$ globally.
	\end{enumerate}
	
	Let $\tilde S$ be the set of   primitive integral binary forms $F$ of degree $n$ that satisfy  \eqref{cGyory} and the following three conditions:
	\begin{enumerate}[(i)]
		\item $F$ has  Galois group $S_n$ (in particular $F$ is irreducible and maximal);
		
		\item $F$ has two simple roots in $\F_{p_i}$ for all
		$i=1,\ldots,k$;
		
		\item  For each prime $p>M$, $F$ does not factor as $cM(x,y)^r$ modulo $p$ for any $r>1$ and any binary form $M(x,y)$ and constant $c>0$.
	\end{enumerate}

	For each $F\in\tilde S$, we will construct $2^k-32n-11$ forms $G_j\in 
	S$ such that, if $H(F)<\tilde X$, then $H(G_j)<X$.
	Thus, if we denote by $h(S,H)$ the number of forms in $S$ with height $H$, we will have
	\begin{equation}\label{ineqclass0}
		\sum_{0<H<X} h(S,H) \geq (2^k-32n-11) \sum_{0<H<\tilde X} h(\tilde S,H).
	\end{equation}
	The right hand side of \eqref{ineqclass0} can be estimated as  in \cite[Theorem 5.3]{AB}
	\footnote{The estimation is done  using  
		Ekedahl's sieve (we refer to the references within \cite{AB} for an exposition) together with the facts that  100\% of forms satisfy (i) and 
		that conditions  (ii) and (iii) are described by  a suitable set of congruences.}.
	We have: 
	\begin{equation}\label{ineqclass2}
		\sum_{0<H<\tilde X} h(\tilde S,H)
		\sim 2^n \tilde X^{n+1} \prod_p\mu_p(\tilde S),
	\end{equation}
	where $\mu_p(\tilde S)$ denotes the $p$-adic density of $ \tilde S$ in the space of integral binary forms of degree $n$.
	By \eqref{ineqclass0} and \eqref{ineqclass2}, 
	\begin{equation}\label{ineqclass3}
		\sum_{0<H<X} h(S,H)\gg (2^k-32n-11) 2^n\Big(\dfrac{(n+1)^{k/2}}{2^{(n+1)k}\prod_{i=1}^kp_i^{n-1}}\Big)^{n+1} X^{n+1},
	\end{equation}
	where the implicit constant  depends only on $X$.
	
	The estimation \eqref{ineqclass3} already gives a positive proportion.  
	Using that $k\leq \frac{M}{\log M}$ and $\prod_{i=1}^k p_i \leq 4^M$, we have
	$$
	\dfrac{(n+1)^{k/2}}{2^{(n+1)k}\prod^k_{i=1} p_i^{n-1}} \gg \dfrac{1}{8^{M n}}. 
	$$
	
	
	For $p\in\left\{p_1,\ldots,p_k\right\}$, the density of forms satisfying (ii) is greater than
	$$
	\dfrac{\frac{p(p-1)}{2} (p^{n-1} -  p^{n-2})}{p^{n+1}}.
	$$
	If we set $p=p_i$, the  first term in the numerator counts the  possibilities for  two simple roots in $\F_{p_i}$  satisfying condition (ii),
	and the second term bounds the possibilities 
	for a binary form of degree $n-2$. The denominator counts the total number of binary forms modulo $p$ by considering
	the $p$ possible values for each of the $n+1$ coefficients. Hence the density of forms satisfying (ii) is greater than
	$$
	\dfrac{1}{2}\Big(  \dfrac{p-1}{p}\Big)^2.
	$$
	
	For $p>M$,  the density of forms satisfying (iii), i.e. that do not factor modulo $p$ as $cM(x,y)^r$ for any constant $c$, 
	any binary form $M$ and any $r>1$,  is
	\begin{align*}
		1-\dfrac{p-1}{p^{n+1}}  \sum_{\substack{r\mid n\\ r<n}} p^{r+1} &\geq 1- \dfrac{p-1}{p^n}  \sum_{r=0}^{[\frac{n}{2}]} p^r\\
		& = 1-\dfrac{p^{[\frac{n}{2}]+1}-1}{p^n}\\
		&\geq 1-\dfrac{1}{p^{[\frac{n}{2}]+1}}.
	\end{align*}
	The factor $p-1$ counts the possible values for $c$ and $[\cdot]$ denotes the integer part.
	Therefore, 
	\begin{align*}
		\prod_p \mu_p(\tilde S) &\geq \prod_{i=1}^k \dfrac{1}{2}\Big(  \dfrac{p_i-1}{p_i}\Big)^2 \prod_{p>M} 1-\dfrac{p^{[\frac{n}{2}]+1}-1}{p^n}\\
		&\geq  \dfrac{1}{2^k}\prod_{i=2}^{k+1} \Big(\dfrac{i-1}{i}\Big)^2  \prod_{p> M} 1-\dfrac{1}{p^{[\frac{n}{2}]+1}}\\
		&\geq \dfrac{1}{2^M(M+1)^2}  \prod_{p>M} 1-\dfrac{1}{p^{[\frac{n}{2}]+1}}.
	\end{align*}
	We conclude that
	$$
	\sum_{0<H<X} h(S,H) \gg  \dfrac{2^n}{8^{Mn(n+1)}(M+1)^2 }  
	\prod_{p> M} 1-\dfrac{1}{p^{[\frac{n}{2}]+1}}.
	$$
	
	\section{Construction of suitable forms}\label{secGj}
	
	\subsection{Construction of $2^k$ primitive forms}
	
	Let $F(x,y)=a_0x^n + a_1x^{n-1}y+\ldots+a_ny^n\in\Z[x,y]$ be a  primitive binary form of degree $n\geq3$.
	Suppose that $\alpha$ is a simple integer root of $F(x,1)$ modulo a prime $p$;
	\begin{equation}\label{alpha}
		F(\alpha,1)\equiv 0\pmod{p} \quad\mbox{with $\alpha\in\Z$}.
	\end{equation}
	Consider the form
	$$
	F_{\tiny{\begin{pmatrix}p &\alpha\\0 &1\end{pmatrix}}}(x,y)=F (px+\alpha y,y),
	$$
	and write $F (px+\alpha y,y)=e_0x^n+e_1x^{n-1}y+\ldots+e_ny^n$. 
	The coefficients $e_i$ are given by  
	\begin{equation}\label{en}
		e_{n-j}=p^j\sum_{i=0}^{n-j} a_i\alpha^{n-i-j}\begin{pmatrix}n-i\\j\end{pmatrix}.
	\end{equation}
	\begin{claim} The content of $F(px+\alpha y,y)$ is $p$. 
	\end{claim}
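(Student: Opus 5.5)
We need to show two things: that $p$ divides every coefficient $e_{n-j}$, and that $p^2$ does not divide all of them. The plan is to get the first from the formula \eqref{en} together with $F(\alpha,1)\equiv 0 \pmod p$. For the second we exhibit one coefficient exactly divisible by $p$, using that $\alpha$ is a simple root.

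\emph{Divisibility by $p$.} For $j\ge 1$, formula \eqref{en} shows $e_{n-j}$ carries the factor $p^j$, so $p\mid e_{n-j}$. For $j=0$ we have
$$e_n=\sum_{i=0}^n a_i\alpha^{n-i}=F(\alpha,1),$$
which is divisible by $p$ by \eqref{alpha}. Hence $p$ divides the content of $F(px+\alpha y,y)$.

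\emph{Exact power.} Look at $j=1$:
$$e_{n-1}=p\sum_{i=0}^{n-1}a_i(n-i)\alpha^{n-1-i}=p\,F'_x(\alpha,1),$$
where $F'_x(\alpha,1)$ is the derivative of $F(x,1)$ at $x=\alpha$. Since $\alpha$ is a simple root of $F(x,1)$ modulo $p$, we have $F'_x(\alpha,1)\not\equiv 0\pmod p$. Therefore $p^2\nmid e_{n-1}$, and the $p$-part of the content is exactly $p$.

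\emph{No other primes.} It remains to rule out a prime $q\ne p$ dividing every $e_i$. The substitution has matrix $\begin{pmatrix}p&\alpha\\0&1\end{pmatrix}$ with determinant $p$, so it is invertible over $\Z_q$. Thus $F(x,y)=G\bigl((x-\alpha y)/p,\,y\bigr)$, where $G(x,y)=F(px+\alpha y,y)$. If $q$ divided all coefficients of $G$, it would divide all coefficients of $F$, because $1/p\in\Z_q$. This contradicts the primitivity of $F$.

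The only delicate point is the meaning of ``simple root''. If it means only that $\alpha$ is a simple root of the reduction of $F(x,1)$, the derivative argument applies directly. If $\deg F(x,1)\bmod p<n$, i.e.\ $p\mid a_0$, we still get $\bar F(x,1)=(x-\alpha)h(x)$ with $h(\alpha)\ne0$, so $F'_x(\alpha,1)\equiv h(\alpha)\not\equiv0$. So this step works in all cases, and I expect no real obstacle.
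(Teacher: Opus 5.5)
Your proof is correct and follows the paper's argument. For the $p$-part it is identical: $p$ divides every coefficient via the factor $p^j$ for $j\ge1$ and $e_n=F(\alpha,1)$, and $e_{n-1}=pF'_x(\alpha,1)$ with $F'_x(\alpha,1)\not\equiv0\pmod p$ rules out $p^2$. For primes $q\neq p$ you invert the substitution over $\Z[1/p]$, whereas the paper inducts through the triangular formula \eqref{en} starting from $e_0=p^na_0$; these express the same fact, and your version states it more cleanly.
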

	\begin{proof}
		Any divisor of the content should divide $e_0=p^na_0$, and 
		thus it should divide either $p$ or $a_0$. By induction on the formula \eqref{en}, any divisor of the content should then divide $p$ or all 
		coefficients $a_0,\ldots,a_n$. Since $F$ is primitive, any divisor of the content should divide $p$. 
		If $j\geq 1$, clearly $e_{n-j}$ is divisible by $p$. By \eqref{alpha} we also have 
		$$e_n=F(\alpha,1)\equiv 0\pmod{p}.$$
		Since $e_{n-1}=pF_x'(\alpha,1)$  and $\alpha$ is a simple root modulo $p$, we have that
		$e_{n-1}\not\equiv 0\pmod{p^2}$. So the content is exactly $p$. 
	\end{proof}
	Hence the form
	\begin{equation}\label{Falpha}
		F_\alpha(x,y):= \dfrac{1}{p} F(px+\alpha y,y)
	\end{equation}
	has integral coefficients and content 1. For later, note that it is congruent to
	\begin{equation}\label{congr1}
		y^{n-1}L(x,y)\pmod{p}
	\end{equation}
	where $L(x,y)=F_x'(\alpha,1)x + y\frac{F(\alpha,1)}{p}$ satisfies $F_x'(\alpha,1)\not\equiv 0\pmod{p}$. 
	
	Note also that we can choose the integer $\alpha$ so that
	$$
	-\dfrac{p-1}{2}\leq \alpha\leq \dfrac{p-1}{2}.
	$$
	Then we have, by \eqref{en}, 
	\begin{align*}
		|e_{n-j}| &\leq p^j H(F)\left(\dfrac{p-1}{2}\right)^{n-j}\begin{pmatrix}n+1\\j+1\end{pmatrix}\\
		&<p^n H(F) \dfrac{2^{n+1}}{\sqrt{n+1}},
	\end{align*}
	where in the second inequality we used Stirling's approximation. Therefore, 
	$$
	H(F_\alpha) < p^{n-1} \dfrac{2^{n+1}}{\sqrt{n+1}} H(F).
	$$
	
	Now let $F\in S$. In particular, $F$ has two  (distinct) simple roots  modulo $p_i$ for all $i=1,\ldots,k$.
	Applying the  procedure described above with $p=p_1$, we obtain $F_{\alpha}$ and $F_{\beta}$, where $\alpha,\beta$ are  simple roots modulo $p_1$.
	The forms
	$F_{\alpha}$ and $F_{\beta}$ have again two distinct simple roots modulo 
	$p_i$ for $i=2,\ldots,k$, since the transformation matrices from $F$ to $F_{\alpha}$ and $F_{\beta}$ have determinant $p_1$, which is a unit in 
	$\F_{p_i}$.
	Hence we can apply the procedure described above to $F_{\alpha}$ and $F_{\beta}$ with $p=p_2$ and obtain four new forms with integral coefficients and
	content 1.
	Applying the whole procedure with each prime $p_i$,  we obtain $2^k$ primitive integral binary forms of degree $n$, 
	that we denote by $G_j$ ($j=1,\ldots,2^k$).
	Their discriminant and height satisfy respectively
	\begin{equation}\label{disc}
		D(G_j)= \Big(\prod_{i=1}^kp_i\Big)^{n-1} D(F),
	\end{equation} 
	$$
	H(G_j)\leq \Big(\prod_{i=1}^kp_i\Big)^{n-1}\dfrac{2^{(n+1)k}}{(n+1)^{k/2}} H(F).
	$$
	Since $F$ is irreducible over $\Q$ and all of 
	the matrix actions involved are rational, the forms $G_j$ are also irreducible over $\Q$.
	
	By construction, each solution to an equation 
	\begin{equation}\label{Gjh0}
		G_j(x,y)=h
	\end{equation}
	gives a solution to the equation
	\begin{equation}\label{Fh0}
		F(x,y)=h\prod_{i=1}^k p_i.
	\end{equation}
	Note that, with the notation \eqref{Falpha}, if $(x_0,y_0)$ is a solution to $F_\alpha(x,y)=h$ 
	and $F_\beta(x,y)=h$ simultaneously, since $\alpha\neq\beta$, then $(px_0+\alpha y_0,y_0)$ and $(px_0+\beta y_0, y_0)$ are two different solutions to $F(x,y)=ph$.
	Also,     if $F_\alpha(x,y)=h$ has two different solutions $(x_0,y_0)$ and $(x_0',y_0')$, then clearly $(px_0+\alpha y_0,y_0)$ and $(px_0'+\alpha y_0,y_0)$ are
	two different solutions to $F(x,y)=ph$.
	It follows that a solution to two equations
	$$
	G_j(x,y)=h,\qquad   G_{j'}(x,y)=h\qquad (j\neq j')
	$$
	gives at least two different solutions to \eqref{Fh0}.

	\subsection{The forms are not equivalent}
	Any two  $G_j,G_{j'}$ as above are of the form $F_\gamma$ and $F_{\gamma'}$ with
	$$
	\gamma = \begin{pmatrix} \prod_{i=1}^k p_i &\sum_{i=1}^k u_i\prod_{\ell=1}^{i-1}p_\ell\\0 &1\end{pmatrix}, \qquad 
	\gamma'= \begin{pmatrix} \prod_{i=1}^{k} p_i &\sum_{i=1}^{k}u'_i\prod_{\ell=1}^{i-1}p_\ell\\0 &1\end{pmatrix},  
	$$
	where $u_i,u'_i$ are simple roots modulo $p_i$. Any transformation $B$ such that $F_{\gamma B}=F_{\gamma'}$ satisfies 
	$B=\gamma^{-1}A\gamma'$, where $A$ stabilizes $F$. Since $F$ 
	has Galois group $S_n$, its stabilizer is trivial and so 
	$$B=\gamma^{-1}\gamma'=\begin{pmatrix} 1
		&\frac{\sum_{i=1}^k(-u_i+u'_i)\prod_{\ell={1}}^{i-1}p_\ell}{\prod_{i=1}^k p_i}\\0 &1\end{pmatrix}.$$
	Using inductively  that $p_i\mid (u_i-u_i')$ only if $u_i=u_i'$ for $i=1,\ldots,k$, we conclude that 
	the term in the top right of the matrix is an integer only if $\gamma=\gamma'$.
	Otherwise,  $B\not\in\mathrm{GL}(2,\Z)$. 
	
	\subsection{The forms are maximal}\label{max} 
	Suppose that a form $G_j$ is not maximal, so there is an integer matrix $A$ with
	$\det(A) > 1$ and $\det(A)\mid\disc(G_j)$. In particular, there is a prime $p$ such that $p\mid\det(A)$ and $p\mid\disc(G_j)$.
	For $p>p_k$,  up to a unit constant every form $G_j$ is equivalent to $F$ over $\Z_p$, and since $F$ is maximal,
	$G_j$ is also  maximal over $\Z_p$, so $p\nmid \det(A)$.
	Now let $p\in\left\{p_1,\ldots,p_k\right\}$. 
	In the construction  process of  $G_j$, there is a form $K(x,y)$ 
	that by \eqref{congr1} satisfies
	\begin{equation}\label{loc1}
		K(x,y)\equiv y^{n-1}(\ell_1 x+\ell_2y)\pmod{p}
	\end{equation}
	with $p\nmid\ell_1$. 
	By construction, $G_j$ is equivalent to $K$ over  $\Z_p$ up to a unit constant. 
	Since $\disc(K)\equiv \ell_1^{2(n-1)}\not\equiv0\pmod{p}$, $p\nmid\disc(G_j)$.

	\section{All forms represent locally every  integer $\leq m$}\label{sec local}
	
	Let $h$ be a positive integer, $h\leq m$. In this section we will show that every form $G_j$ represents $h$ over $\Z_p$ for all prime $p$.

	\subsection{At primes $\leq M$}
	Let $p$ be a prime in $\left\{p_1,\ldots,p_k\right\}$. 
	As in section \ref{max}, we consider the form $K(x,y)$ in the construction of $G_j$ 
	that by \eqref{congr1} satisfies
	\begin{equation}\label{loc1}
		K(x,y)\equiv y^{n-1}(\ell_1 x+\ell_2y)\pmod{p}
	\end{equation}
	with $p\nmid\ell_1$. 
	
	Set $p=p_i$, so that $K(x,y)$ is obtained in the $i-th$ step of the construction.  Using \eqref{loc1} it is clear that, for  each $y_0=1,\ldots,p-1$, 
	the equation 
	$$
	K(x,y)=h\prod_{\ell=i+1}^k p_\ell
	$$ has a solution modulo $p$ which is not a root  of the derivative $K_x'(x,y)\equiv y^{n-1}\ell_1$ ($\mbox{mod } p$). 
	By Hensel's Lemma, we can  lift the solution
	over $\Z_p$. Now by construction, $K(x,y)$ and $G_j(x,y)\prod_{\ell=i+1}^k p_\ell $ are equivalent over $\Z_p$, 
	so the equation $G_j(x,y)=h$ has also a solution over $\Z_p$.
	
	\subsection{At primes $>M$}
	Let $p$ be a prime greater than $M$. In particular, $p>h$ and $p>n$,  so $p\nmid hn$.
	Consider the  plane curve $h z^n=G_j(x,y)$ defined over  $\F_p$. 
	It follows from  (iii)  and the irreducibility of $G_j$, that 
	$G_j$ is irreducible also over $\F_p$.
	This ensures that $G_j(x,y)=0$ does not have any solution over $\F_p$ and hence the curve is smooth (we use here that $p\nmid hn$).
	Let $N$ be the number of points over $\F_p$ on the curve.
	Since $p>(2g+1)^2$ (where $g=\frac{(n-1)(n-2)}{2}$ was introduced earlier), the Hasse-Weil bound 
	$$
	|N-(p+1)|\leq 2g\sqrt{p}
	$$
	gives a smooth point $(x_0,y_0,z_0)$ with $z_0\neq0$ over $\F_p$. 
	This point gives a solution $(x_0z_0^{-1},y_0z_0^{-1})$ to $G_j(x,y)\equiv h\pmod{p}$ that can be lifted over $\Z_p$ by Hensel's Lemma.

	\section{Many forms  do not represent globally any  integer $\leq m$}\label{sec global}
	
	Let $h$ be a positive integer, $h\leq m$. Suppose that $G_j(x,y)=h$ has a solution $(x_0,y_0)\in\Z^2$. By construction,
	\begin{equation}\label{Gj}
		G_j= \Big(\prod_{i=1}^k p_i^{-1}\Big)
		F_{\tiny{\begin{pmatrix} p_1 &u_1\\0 &1\end{pmatrix}}\cdots\tiny{\begin{pmatrix} p_k &u_k\\0 &1\end{pmatrix}}},
	\end{equation}
	where $u_i$ are simple roots modulo $p_i$. It follows that
	$$
	(X_0,y_0):=\begin{pmatrix} p_1 &u_1\\0 &1\end{pmatrix}\cdots\begin{pmatrix} p_k &u_k\\0 &1\end{pmatrix} \begin{pmatrix} x_0\\y_0 \end{pmatrix}
	$$
	is a solution to 
	\begin{equation}\label{Fh}
		F(x,y)=h\prod_{i=1}^k p_i.
	\end{equation}
	Hence a solution to the inequality 
	\begin{equation}\label{Gm}
		G_j(x,y)\leq m
	\end{equation}
	gives a solution to 
	\begin{equation}\label{Fm}
		F(x,y)\leq  m \prod_{i=1}^k p_i
	\end{equation}
	and actually a primitive solution (a non-primitive solution $(a\tilde X_0,a\tilde y_0)$ to \eqref{Fm} with $(\tilde X_0,\tilde y_0)=1$  gives a primitive solution to
	$F(x,y)\leq \frac{m}{a^n} \prod_{i=1}^k p_i$).
	Suppose that $(x_0,y_0)$ is also a solution to $G_{j'}(x,y)=h'$, with $j\neq j'$, $0<h'\leq m$. Then $G_{j'}$ is of the form \eqref{Gj} with $u_1',\ldots,u_k'$ instead
	of $u_1,\ldots,u_k$ and
	$$
	(X_0',y_0):= \begin{pmatrix} p_1 &u'_1\\0 &1\end{pmatrix}\cdots\begin{pmatrix} p_k &u'_k\\0 &1\end{pmatrix} \begin{pmatrix} x_0\\y_0 \end{pmatrix}
	$$
	is a solution to 
	$$
	F(x,y)=h'\prod_{i=1}^k p_i.
	$$
	If $h=h'$, then we already know (see section 4.1) that $(x_0,y_0)$ gives at least two solutions to \eqref{Fh} and hence to \eqref{Fm}. 
	If $h\neq h'$, then $F(X_0',y_0)\neq F(X_0,y_0)$ so $X_0\neq X_0'$, and hence also in this case $(x_0,y_0)$ gives two solutions to \eqref{Fm}.
	Therefore, a solution to $\ell$ inequalities \eqref{Gm} (with $j\in\left\{1,\ldots,2^k\right\}$) gives at least $\ell$ primitive solutions to \eqref{Fm}. 
	By Theorem \ref{Gyory}, the inequality \eqref{Fm} has at most $32n+11$ primitive solutions. So there are at least $2^k-32n-11$ forms 
	$G_j$ with no solution 
	to \eqref{Gm}.


\end{document}